\DeclareFontFamily{OT2}{cmr}{\hyphenchar\font45 }
\DeclareFontShape{OT2}{cmr}{m}{n}{
<5><6><7><8><9>gen*wncyr
<10><10.95><12><14.4><17.28><20.74><24.88>wncyr10}{}
\DeclareFontShape{OT2}{cmr}{b}{n}{
<5><6><7><8><9>gen*wncyb
<10><10.95><12><14.4><17.28><20.74><24.88>wncyb10}{}
\DeclareMathAlphabet{\mathcyr}{OT2}{cmr}{m}{n}
\DeclareMathAlphabet{\mathcyb}{OT2}{cmr}{b}{n}
\SetMathAlphabet{\mathcyr}{bold}{OT2}{cmr}{b}{n}
\newtheorem{theoremcounter}{Theorem Counter}[section]
\theoremstyle{plain}
\newtheorem{theorem}[theoremcounter]{Theorem}
\newtheorem{conjecture}[theoremcounter]{Conjecture}
\newtheorem{lemma}[theoremcounter]{Lemma}
\newtheorem{corollary}[theoremcounter]{Corollary}
\theoremstyle{definition}
\newtheorem{definition}[theoremcounter]{Definition}
\newcommand{\cA}{\mathcal{A}}
\newcommand{\bc}{\boldsymbol{c}}
\newcommand{\bdelta}{\boldsymbol{\delta}}
\newcommand{\cD}{\mathcal{D}}
\newcommand{\fD}{\mathfrak{D}}
\newcommand{\cH}{\mathcal{H}}
\newcommand{\NN}{\mathbb{N}}
\newcommand{\cP}{\mathcal{P}}
\newcommand{\QQ}{\mathbb{Q}}
\newcommand{\RR}{\mathbb{R}}
\newcommand{\ZZ}{\mathbb{Z}}
\newcommand{\wt}{\mathrm{wt}}
\numberwithin{equation}{section}
\newcommand{\sh}{\mathbin{\mathcyr{sh}}}
\DeclareMathOperator{\Ker}{Ker}
\newcommand{\smallxcancel}[2][0.8]{\mathrlap{{\renewcommand{\CancelColor}{\color{red}}\scalebox{#1}{$\xcancel{\phantom{#2}}$}}}#2}
\address{Nagahama Institute of Bio-Science and Technology, 1266, Tamura, Nagahama, Shiga, 526-0829, Japan}
\email{s\_seki@nagahama-i-bio.ac.jp}
\thanks{This research was supported by JSPS KAKENHI Grant Number JP21K13762.}
\keywords{Multiple zeta values, Finite multiple zeta values, Hoffman basis, Drop 1}
\title[]{Diamond lift of Hirose--Sato's formula involving the Hoffman basis}
\author[]{Shin-ichiro Seki}
\date{}
\begin{document}
\begin{abstract}
In this paper, we give a new proof of Hirose--Sato's formula for the expansion of
\[
\zeta(\{2\}^{a_1-1},3,\dots,\{2\}^{a_r-1},3,\{2\}^{c-1},1,\{2\}^{b_1},\dots, 1,\{2\}^{b_s})
\]
in the Hoffman basis, using the drop 1 relation.
\end{abstract}
\maketitle
\section{Introduction}
For positive integers $k_1, \dots, k_r$ with $k_r\geq 2$, the \emph{multiple zeta value} (MZV) $\zeta(k_1,\dots, k_r)$ is defined by
\[
\zeta(k_1,\dots, k_r)\coloneqq\sum_{0<n_1<\cdots <n_r}\frac{1}{n_1^{k_1}\cdots n_r^{k_r}}.
\]
Linear relations among MZVs have been studied for many years.
For each positive integer $c$, the relation
\begin{equation}\label{eq:Hoffman-conj}
\zeta(3,\{2\}^{c-1},1,2)=\zeta(\{2\}^{c+2})+2\zeta(3,3,\{2\}^{c-1})
\end{equation}
is known as \emph{Hoffman's conjectural identity} and had remained open for about seventeen years.
Here the notation $\{2\}^n$ denotes the string $2,\dots, 2$ in which the entry 2 is repeated $n$ times.
Charlton proved in \cite{Charlton2016} that there exists a rational number $q_c$ such that $\zeta(3,\{2\}^{c-1},1,2)=q_c\zeta(\{2\}^{c+2})+2\zeta(3,3,\{2\}^{c-1})$, and Hirose and Sato proved not only Hoffman's conjectural identity but also the following more general theorem.
\begin{theorem}[Hirose--Sato~\cite{HiroseSato2019A}]\label{thm:HiroseSato1}
For positive integers $a, b$, and $c$, we have
\begin{align*}
&\zeta(\{2\}^{a-1},3,\{2\}^{c-1},1,\{2\}^b)\\
&=\zeta(\{2\}^{a+b+c})+\zeta(\{2\}^{a-1},3,\{2\}^{b-1},3,\{2\}^{c-1})+\zeta(\{2\}^{b-1},3,\{2\}^{a-1},3,\{2\}^{c-1}).
\end{align*}
\end{theorem}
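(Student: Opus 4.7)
I would prove the identity by a direct application of the \emph{drop 1 relation}, the main tool announced in the abstract. The left-hand side $\zeta(\{2\}^{a-1},3,\{2\}^{c-1},1,\{2\}^b)$ contains exactly one index entry equal to $1$, making a single application of such a relation the natural starting point; the right-hand side consists of three multiple zeta values in the Hoffman basis, which is precisely the kind of output the drop 1 relation is designed to produce.

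Concretely, the drop 1 relation applied to the above index should yield three families of terms classified by where the dropped $1$ is absorbed: into a $2$ in the leading $\{2\}^{a-1}$ block (turning it into a $3$), into the central $3$ (turning it into a $4$), or into a $2$ in the middle $\{2\}^{c-1}$ block (again turning it into a $3$). The first and third families are parametrised by the position of the chosen $2$, and I expect each family to telescope to a single Hoffman-basis term: the first to $\zeta(\{2\}^{b-1},3,\{2\}^{a-1},3,\{2\}^{c-1})$ (the symmetry between the leading block and the trailing $\{2\}^b$ block producing the role-swap of $a$ and $b$) and the third to $\zeta(\{2\}^{a-1},3,\{2\}^{b-1},3,\{2\}^{c-1})$. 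The $4$-entry from the second family should reduce via a secondary identity, producing together with the boundary contribution of the drop 1 relation the missing $\zeta(\{2\}^{a+b+c})$.

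The hard part will be the bookkeeping: ensuring that the telescoping sums really collapse to exactly one surviving term each, that the signs and coefficients all come out to $+1$, and that the $4$-term is handled cleanly without leaving stray contributions. In particular the appearance of the ``swapped'' term $\zeta(\{2\}^{b-1},3,\{2\}^{a-1},3,\{2\}^{c-1})$, in which the roles of $a$ and $b$ are interchanged relative to the original index, is the most delicate feature; it strongly suggests that the drop 1 relation carries an intrinsic symmetry matching the trailing $\{2\}^b$ block against the leading $\{2\}^{a-1}$ block, and verifying this symmetry explicitly in the context of our specific index is where I expect the main work to be concentrated.
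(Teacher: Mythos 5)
Your overall instinct---that the identity should fall out of the drop 1 relation---is exactly the strategy of the paper, but the concrete mechanism you describe does not match how the drop 1 operator works, and the gaps are substantial. The operator $\cD$ is not a one-shot rule that ``absorbs the $1$ into a neighbouring entry'': it is defined by a recursion (Definition~\ref{def:drop1operator}) summing over pairs of subsets $A$, $B$ of positions of the exponent tuple subject to parity constraints, and every term of that recursion again involves $\fD$ of a smaller tuple. Consequently there is no single application followed by telescoping. The paper instead proves the stronger statement $\cD(w_1\tau(w_2))=w_1\star w_2$ for all $w_1,w_2\in\cH^{2,3}$ (Theorem~\ref{thm:main}) by induction on $\wt(w_1)+\wt(w_2)$, and the three terms on the right-hand side of Theorem~\ref{thm:HiroseSato1} arise not from ``where the dropped $1$ lands'' but from the three terms of the defining recurrence $wz_3\star w'z_3=(w\star w'z_3)z_3+(wz_3\star w')z_3+(w\star w')z_2^3$, which the surviving boundary terms of the drop 1 recursion are shown to reproduce. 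Your plan is missing the two ingredients that make this work: the explicit bijections $\iota_1,\iota_2$ between the index sets $\cP^{\text{eo}}$ and $\cP^{\text{oe}}$ that cancel almost all terms of the recursion, and the use of the induction hypothesis to identify each surviving $\fD$-value with a $\star$-product of lower weight.

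Two specific predictions in your proposal are also wrong. No term with the central $3$ promoted to a $4$ survives, and there is no ``secondary identity'' reducing a $4$-entry back into the Hoffman basis; the whole point of the paper is that for these indices the drop 1 expansion never leaves $\cH^{2,3}$, which is far from obvious and is precisely what the inductive cancellation establishes. Likewise, $\zeta(\{2\}^{a+b+c})$ does not come from handling a $4$: it comes from the $(w\star w')z_2^3$ summand above, i.e.\ from the boundary term in which the two exponents $2$ flanking the junction between $w_1$ and $\tau(w_2)$ are decremented simultaneously. Finally, note that the paper never proves Theorem~\ref{thm:HiroseSato1} in isolation: it is obtained by specializing $w_1=z_2^{a-1}z_3z_2^{c-1}$ and $w_2=z_2^{b-1}z_3$ in Theorem~\ref{thm:HiroseSato2}, which in turn follows from Theorem~\ref{thm:main} together with Theorem~\ref{thm:drop1}~\ref{it:HMSW1}. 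A direct attack on the special case would still have to run essentially the same induction, since the recursion for $\fD$ of the tuple attached to $(\{2\}^{a-1},3,\{2\}^{c-1},1,\{2\}^b)$ refers back to tuples of the same shape with smaller parameters.
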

Their proof can be viewed as showing that the above identity is a consequence of the \emph{confluence relation} later formulated in \cite{HiroseSato2019B}.

Hirose and Sato gave a further generalization in \cite{HiroseSato2022+}, and we set up notation in order to state their result.
Let $\cH\coloneqq\ZZ\langle x,y\rangle$ be the non-commutative polynomial ring over $\ZZ$ in two variables $x$ and $y$.
Let $z_k\coloneqq yx^{k-1}$ for each positive integer $k$.
Subrings $\cH^{2,3}\subset\cH^0\subset\cH$ are defined by $\cH^{2,3}\coloneqq\ZZ\langle z_2,z_3\rangle$ and $\cH^0\coloneqq\ZZ+y\cH x$, respectively.
A $\ZZ$-linear map $Z\colon\cH^0\to\RR$ is defined by $Z(z_{k_1}\cdots z_{k_r})=\zeta(k_1,\dots, k_r)$ and $Z(1)=1$.
A product $\star$ on $\cH^{2,3}$ is defined inductively by 
\begin{itemize}[leftmargin=2.5em]
\item $w\star 1=1\star w=w$,
\item $wz_2\star w'=w\star w'z_2=(w\star w')z_2$,
\item $wz_3\star w'z_3=(w\star w'z_3)z_3+(wz_3\star w')z_3+(w\star w')z_2^3$
\end{itemize}
for any words $w$ and $w'$ in $\cH^{2,3}$, and then extended by $\ZZ$-bilinearity.
Let $\tau$ be an anti-automorphism on $\cH$ defined by $\tau(x)=y$ and $\tau(y)=x$.
Using this notation, we can now state the following theorem.
\begin{theorem}[{Hirose--Sato~\cite[Proposition~14]{HiroseSato2022+}}]\label{thm:HiroseSato2}
For $w_1, w_2\in\cH^{2,3}$, we have
\[
Z(w_1\tau(w_2))=Z(w_1\star w_2).
\]
\end{theorem}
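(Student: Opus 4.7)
I would proceed by a lexicographic induction on the pair (total number of $z_3$'s in $w_1,w_2$; total length of $w_1,w_2$). To carry the inductive hypothesis through the extra $z_3$ and $z_2^3$ tails that appear in the defining recursion of $\star$, it is convenient to strengthen the statement to: for all $v\in\cH^{2,3}$ making the products admissible, $Z((w_1\star w_2)v)=Z(w_1\tau(w_2)v)$.

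The base case $w_1=z_2^a$, $w_2=z_2^b$ is immediate because $\tau$ fixes $z_2$, so $w_1\tau(w_2)=z_2^{a+b}$, and iterating the rule $wz_2\star w'=(w\star w')z_2$ gives $w_1\star w_2=z_2^{a+b}$ as well. For the inductive step, first use this same second rule together with the duality $Z=Z\circ\tau$ on $\cH^0$ to absorb trailing $z_2$'s; this strictly decreases the total length without increasing the $z_3$-count and reduces to the case where both $w_1$ and $w_2$ end in $z_3$ (the case $w_1=1$ or $w_2=1$ being trivial).

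Write $w_1=u_1z_3$ and $w_2=u_2z_3$. The third defining rule gives
\[
w_1\star w_2 \;=\; (u_1\star u_2z_3)\,z_3 \;+\; (u_1z_3\star u_2)\,z_3 \;+\; (u_1\star u_2)\,z_2^3.
\]
On the MZV side, $\tau(z_3)=y^2x$ introduces a ``$1$''-letter at the interface, so
\[
w_1\tau(w_2)\;=\;u_1\cdot(yx^2)(y)(yx)\cdot\tau(u_2),
\]
and the underlying index sequence contains the pattern ``$\dots,3,1,2,\dots$'' at the junction. The heart of the proof is to apply the \emph{drop 1 relation} to this isolated ``$1$''. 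Applied to $w_1\tau(w_2)v$ for any admissible tail $v$, this relation should produce exactly the three terms
\[
Z\bigl(u_1\tau(u_2z_3)\,z_3\,v\bigr)\;+\;Z\bigl(u_1z_3\,\tau(u_2)\,z_3\,v\bigr)\;+\;Z\bigl(u_1\tau(u_2)\,z_2^3\,v\bigr),
\]
and the strengthened inductive hypothesis applied to the three pairs $(u_1,u_2z_3)$, $(u_1z_3,u_2)$, $(u_1,u_2)$ with tails $z_3v$, $z_3v$, $z_2^3v$ respectively (each pair having strictly smaller total $z_3$-count) closes the induction.

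The main obstacle is verifying that the drop 1 relation, applied to this distinguished ``$1$'', yields precisely these three terms with the correct coefficients and no stray ones. The most delicate point is the ``$z_2^3$''-correction: it encodes the ``collision'' of two consecutive $z_3$'s at the junction and is exactly the combinatorial origin of Hoffman's conjectural identity \eqref{eq:Hoffman-conj}, so extracting it with coefficient $1$ (and matching the other two terms exactly) is the technical core of the argument.
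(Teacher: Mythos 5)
Your overall strategy---induct so that the three terms of the $\star$-recursion are matched against a three-term ``drop 1'' identity at the junction, after first disposing of trailing $z_2$'s---has the same shape as the paper's argument, but two of your steps fail. First, the strengthened induction hypothesis $Z((w_1\star w_2)v)=Z(w_1\tau(w_2)v)$ is false: taking $w_1=1$, $w_2=z_3$, $v=z_2$ gives $w_1\tau(w_2)v=z_1z_2^2$ and $(w_1\star w_2)v=z_3z_2$, i.e.\ the claim $\zeta(1,2,2)=\zeta(3,2)$; but $\zeta(1,2,2)=\zeta(2,3)$ by duality, and $\zeta(2,3)\neq\zeta(3,2)$ (numerically $0.228\ldots$ versus $0.711\ldots$). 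The root problem is that $Z(w)=Z(w')$ does not imply $Z(wv)=Z(w'v)$: $Z$ is not compatible with concatenation. The paper avoids exactly this by proving the identity one level up, as the polynomial identity $\cD(w_1\tau(w_2))=w_1\star w_2$ in $\cH^{\geq 2}$ (Theorem~\ref{thm:main}); a polynomial identity can be right-multiplied by whatever trailing letters the recursions produce, and $Z$ is applied only at the very end via Theorem~\ref{thm:drop1}~\ref{it:HMSW1}. Any repair of your induction essentially forces you to carry this stronger $\cD$-level statement.

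Second, the step you yourself flag as the ``technical core''---that the drop 1 relation ``applied to the isolated $1$'' at the junction yields exactly the three terms with coefficient $1$ and nothing else---is not an available tool; it is the theorem. The drop 1 operator of Definition~\ref{def:drop1operator} is a global recursion, summing over subsets $A$, $B$ of all positions of the index and peeling letters off the right end; it is not a local rewriting rule acting on one designated $1$ (and $\tau(w_2)$ contains one entry $1$ for every $z_3$ in $w_2$, not only the one at the junction). Showing that this global recursion applied to $w_1\tau(w_2)$ collapses to precisely the three $\star$-terms is the entire content of the paper's Section~3: the bijections $\iota_1,\iota_2$ cancelling the second sum against the third, the identity $F(\bc,A,B)=0$, and a separate computation for the case $c>1$. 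On that last point, your plan to ``absorb trailing $z_2$'s'' also does not work as stated: the only available word identity, $u_1z_2\,\tau(w_2)=u_1\tau(w_2z_2)$, merely shuffles a $z_2$ between $w_1$ and $w_2$ and leaves both of your induction parameters unchanged, while a $z_2$ at the end of $w_1$ cannot be commuted past $\tau(w_2)$ into the tail, since $u_1z_2\tau(w_2)v$ and $u_1\tau(w_2)z_2v$ are different words. Handling this case requires its own drop-1 computation, which is roughly half of the paper's proof.
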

By specializing Theorem~\ref{thm:HiroseSato2} to $w_1=z_2^{a-1}z_3z_2^{c-1}$ and $w_2=z_2^{b-1}z_3$, we recover Theorem~\ref{thm:HiroseSato1}.
Hirose and Sato derived Theorem~\ref{thm:HiroseSato2} from their \emph{block shuffle identity} (\cite[Theorem~10]{HiroseSato2022+}).

Hoffman conjectured in \cite{Hoffman1997} that any multiple zeta value can be expressed as a $\QQ$-linear combination of the elements of
\[
\{\zeta(k_1,\dots, k_r) \mid r\in\ZZ_{>0}, k_1,\dots, k_r\in\{2,3\}\}.
\]
This conjecture was proved by Brown in \cite{Brown2012}.
Although the linear independence of these elements remains open, they are called the \emph{Hoffman basis}.
Theorem~\ref{thm:HiroseSato2} provides a family for which the expansion in the Hoffman basis can be described explicitly, and, even more remarkably, it shows that the expansion can be taken with \emph{integer coefficients}.
In general, the coefficients are not integral; for instance,
\[
\zeta(3,1,4)=-\frac{2048}{4125}\zeta(2,2,2,2)-\frac{17}{275}\zeta(2,3,3)+\frac{2}{275}\zeta(3,2,3)+\frac{113}{275}\zeta(3,3,2).
\]

In contrast, Hirose, Maesaka, Seki, and Watanabe recently proved in \cite{HiroseMaesakaSekiWatanabe2025+}, together with an explicit algorithm, that any multiple zeta value can be expressed as a $\ZZ$-linear combination of
\[
\{\zeta(k_1,\dots, k_r) \mid r\in\ZZ_{>0}, k_1,\dots, k_r\geq 2\}.
\]
Their explicit expansions give rise to a family of relations among MZVs, which we refer to as the \emph{drop 1 relation}.
Since their expansions have integer coefficients, it is natural to ask whether Theorem~\ref{thm:HiroseSato2} can be viewed as an expansion arising from the drop 1 relation.
However, in general, expansions coming from the drop 1 relation involve MZVs with entries greater than $3$.
For instance,
\[
\zeta(3,1,4)=\zeta(5,3)-\zeta(4,4)+\zeta(2,4,2)-\zeta(2,3,3)-2\zeta(3,2,3)-\zeta(3,3,2).
\]
Therefore, it is not obvious whether the expansion in Theorem~\ref{thm:HiroseSato2}, in which only 2 and 3 appear, can be obtained from the drop 1 relation.
The main result of this paper is that this is indeed the case.

A subring $\cH^{\geq 2}\subset\cH^0$ is defined by $\cH^{\geq 2}\coloneqq\ZZ+z_2\ZZ\langle x,z_2\rangle=\ZZ\langle z_2, z_3, z_4,\dots\rangle$.
A linear map $\cD\colon\cH^0\to\cH^{\geq 2}$, called the \emph{drop 1 operator}, is defined in Definition~\ref{def:drop1operator}, and its properties are summarized in Theorem~\ref{thm:drop1}.
\begin{theorem}\label{thm:main}
For $w_1, w_2\in\cH^{2,3}$, we have
\[
\cD(w_1\tau(w_2))=w_1\star w_2.
\]
\end{theorem}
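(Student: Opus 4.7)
The plan is to proceed by induction on $|w_1|+|w_2|$, where $|w|$ denotes the total number of $x,y$-letters in $w$, mirroring the recursive definition of $\star$. At each step one matches the relevant clause in the definition of $\star$ against a corresponding structural identity for the drop $1$ operator $\cD$, whose defining properties are provided by Definition~\ref{def:drop1operator} and Theorem~\ref{thm:drop1}.

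For the base cases, when $w_2=1$ we have $w_1\tau(1)=w_1\in\cH^{2,3}\subset\cH^{\geq 2}$; since $\cD$ fixes $\cH^{\geq 2}$ pointwise (an expected basic property of the drop $1$ operator), both sides equal $w_1=w_1\star 1$. The symmetric case $w_1=1$ is the non-trivial preliminary identity $\cD(\tau(w_2))=w_2$ for $w_2\in\cH^{2,3}$, which I would establish first by its own induction on $|w_2|$: writing $w_2=w_2'z_k$ with $k\in\{2,3\}$, one has $\tau(w_2)=y^{k-1}x\cdot\tau(w_2')$, and the induction would close by a ``leading $y^{k-1}x$'' rule for $\cD$ that extracts $z_k$ from the left and re-attaches it on the right after $\cD$.

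For the inductive step with $w_1,w_2$ both non-trivial, case-split on the final letters. If $w_2=w_2'z_2$, then $\tau(w_2)=z_2\tau(w_2')$, and the $\star$-rule $w_1\star w_2=(w_1\star w_2')z_2$ together with the inductive hypothesis reduces the task to the structural identity
\[
\cD\bigl(w_1\cdot z_2\cdot\tau(w_2')\bigr)=\cD\bigl(w_1\tau(w_2')\bigr)\cdot z_2,
\]
which says that a $z_2$ inserted at the $w_1/\tau(w_2')$ junction commutes out to the right through $\cD$; the parallel case $w_1=w_1'z_2$ reduces to the same identity. The essentially new case is $w_1=\tilde w_1 z_3$, $w_2=\tilde w_2 z_3$: now $\tau(w_2)=y^2x\cdot\tau(\tilde w_2)$, so $w_1\tau(w_2)=\tilde w_1 z_3\cdot y^2x\cdot\tau(\tilde w_2)$, and the three-term $\star$-rule forces
\begin{align*}
\cD\bigl(\tilde w_1 z_3\cdot y^2x\cdot\tau(\tilde w_2)\bigr)
&=\cD\bigl(\tilde w_1\cdot y^2x\cdot\tau(\tilde w_2)\bigr)\,z_3+\cD\bigl(\tilde w_1 z_3\cdot\tau(\tilde w_2)\bigr)\,z_3\\
&\quad+\cD\bigl(\tilde w_1\tau(\tilde w_2)\bigr)\,z_2^3.
\end{align*}
Applying the inductive hypothesis to each of the three summands on the right recovers exactly $(\tilde w_1\star\tilde w_2z_3)z_3+(\tilde w_1 z_3\star\tilde w_2)z_3+(\tilde w_1\star\tilde w_2)z_2^3$, which by the definition of $\star$ equals $(\tilde w_1 z_3)\star(\tilde w_2 z_3)=w_1\star w_2$.

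The main obstacle will be the $z_3\star z_3$ case: the three-term identity above encodes how $\cD$ resolves the interior block pattern $x^2y^2$ created at the junction $z_3\cdot\tau(z_3)$, with the $z_2^3$-term corresponding to a ``collision'' contribution of three $z_2$ blocks. Extracting this identity directly from the recursive definition of $\cD$ while carefully tracking the interaction with the surrounding pieces of $\tilde w_1$ and $\tau(\tilde w_2)$ is the combinatorial core of the proof. The simpler $z_2$-commutation identity and the preliminary lemma on $\cD\circ\tau$ should both follow from analogous but easier applications of the same structural properties of $\cD$; once all of these are in hand, Theorem~\ref{thm:HiroseSato2} drops out by applying $Z$ and using that $Z\circ\cD=Z$ on $\cH^0$.
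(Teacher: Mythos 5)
Your induction skeleton---induction on total weight, base cases via the fact that $\cD$ fixes $\cH^{\geq 2}$, and a case split on the final letters matching the three clauses of the $\star$-recursion---coincides with the structure of the paper's proof, and the two reduction identities you isolate (the $z_2$-commutation $\cD(w_1z_2\tau(w_2'))=\cD(w_1\tau(w_2'))z_2$ and the three-term identity at a $z_3\cdot\tau(z_3)$ junction) are exactly the identities the paper establishes. The problem is that you explicitly defer their proofs (``the combinatorial core\ldots the main obstacle''), and that deferred part \emph{is} essentially the entire content of the paper's argument. None of the properties \ref{it:HMSW1}--\ref{it:HMSW3} of $\cD$ yields these identities; they must be extracted from the recursive sum in Definition~\ref{def:drop1operator}. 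Concretely, the paper encodes $w_1\tau(w_2)$ by a tuple $\bc=[a_1,\dots,a_r;b_1,\dots,b_s;c]$ (after first absorbing any trailing $z_2$'s of $w_2$ into $w_1$, which is why only two cases remain), constructs explicit maps $\iota_1,\iota_2\colon\cP^{\text{eo}}([2t]^1_{\bc})\to\cP^{\text{oe}}([2t]^1_{\bc})$ realizing $\cP^{\text{oe}}([2t]^1_{\bc})$ as two disjoint copies of $\cP^{\text{eo}}([2t]^1_{\bc})$, cancels the second sum of the recursion against the $\iota_2$-image of the third, and pairs the first sum against the $\iota_1$-image; the resulting pairwise differences vanish \emph{by the induction hypothesis}, since each pair is an instance of the theorem at strictly smaller weight (e.g.\ $v_1z_2\star v_2-(v_1\star v_2)z_2=0$, and likewise $F(\bc,A,B)=0$ via the $z_3\star z_3$ clause). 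Only the boundary terms survive, giving $\fD(\cdot)z_2$ in the $c>1$ case and the three terms of the $z_3\star z_3$ clause in the $c=1$ case. This intertwining matters: your ``structural identities for $\cD$'' are themselves instances of the theorem at the current weight, so they cannot be established as standalone lemmas prior to the main induction; they have to be proved inside the induction step by exactly this kind of cancellation on the defining sum. Until that combinatorial analysis is supplied, the proposal is a correct roadmap rather than a proof.

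A smaller point: the base case $w_1=1$ needs neither a separate induction nor a ``leading $y^{k-1}x$ rule'' (which is not among the stated properties of $\cD$ and would itself require proof from Definition~\ref{def:drop1operator}); it follows at once from Theorem~\ref{thm:drop1}~\ref{it:HMSW3} together with \ref{it:HMSW2}, since $\cD(\tau(w_2))=\cD(w_2)=w_2$.
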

Not only does this theorem yield a new proof of Theorem~\ref{thm:HiroseSato2}, but it also implies, by Theorem~\ref{thm:drop1}~\ref{it:HMSW1}, that the same relations hold for \emph{multiple zeta diamond values}.
We omit the definition of $\zeta^{\diamondsuit}(k_1,\dots, k_r)$ here; see \cite[Definition~2.3]{HiroseMaesakaSekiWatanabe2025+}.
A $\ZZ$-linear map $Z^{\diamondsuit}\colon\cH^0\to\QQ^{\NN}$ is defined by $Z^{\diamondsuit}(z_{k_1}\cdots z_{k_r})=\zeta^{\diamondsuit}(k_1,\dots, k_r)$ and $Z^{\diamondsuit}(1)=1$.

\begin{corollary}\label{cor:diamond-lift}
For $w_1, w_2\in\cH^{2,3}$, we have
\[
Z^{\diamondsuit}(w_1\tau(w_2))=Z^{\diamondsuit}(w_1\star w_2).
\]
\end{corollary}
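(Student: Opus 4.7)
The plan is a short deduction from Theorem~\ref{thm:main}: I would apply $Z^{\diamondsuit}$ to both sides of that identity and then rewrite the left-hand side using the diamond version of the drop-1 relation, which is exactly the Hirose--Maesaka--Seki--Watanabe statement recorded in Theorem~\ref{thm:drop1}\ref{it:HMSW1}. This yields Corollary~\ref{cor:diamond-lift} almost immediately; the real work has been absorbed into Theorem~\ref{thm:main}.

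Before the one-line calculation, I would verify that every object involved lies in the appropriate subspace. The map $Z^{\diamondsuit}$ is defined on $\cH^0$, and $\cD$ sends $\cH^0$ into $\cH^{\geq 2}\subset\cH^0$, so I must check that $w_1\tau(w_2)\in\cH^0$. Since $\tau$ is an anti-automorphism with $\tau(x)=y$ and $\tau(y)=x$, a direct computation gives $\tau(z_k)=y^{k-1}x$ for every $k\geq 2$, so $\tau(w_2)$ is either $1$ or a word beginning with $y$ and ending with $x$; the same holds for $w_1\in\cH^{2,3}\subset\cH^0=\ZZ+y\cH x$. Multiplying, $w_1\tau(w_2)\in\ZZ+y\cH x=\cH^0$, as required. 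On the other side, $w_1\star w_2\in\cH^{2,3}\subset\cH^0$ by the inductive definition of $\star$, so $Z^{\diamondsuit}(w_1\star w_2)$ is defined.

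With these in place, the proof reduces to the chain
\[
Z^{\diamondsuit}(w_1\tau(w_2))=Z^{\diamondsuit}\bigl(\cD(w_1\tau(w_2))\bigr)=Z^{\diamondsuit}(w_1\star w_2),
\]
in which the first equality is Theorem~\ref{thm:drop1}\ref{it:HMSW1} applied to $w_1\tau(w_2)\in\cH^0$, and the second is obtained by applying the $\ZZ$-linear map $Z^{\diamondsuit}$ to the identity of Theorem~\ref{thm:main}. There is no genuine obstacle in the corollary itself; the expected difficulty, and the point of phrasing the result as a \emph{diamond lift}, is that Theorem~\ref{thm:main} must be established at the level of the operator $\cD$ rather than merely at the level of real-valued $Z$, so that the conclusion survives passage to $Z^{\diamondsuit}$ without any additional analytic input.
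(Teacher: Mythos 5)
Your proposal is correct and matches the paper's (implicit) argument exactly: the paper derives Corollary~\ref{cor:diamond-lift} from Theorem~\ref{thm:main} together with Theorem~\ref{thm:drop1}~\ref{it:HMSW1}, which gives $Z^{\diamondsuit}(w)=Z^{\diamondsuit}(\cD(w))$ for $w\in\cH^0$. Your additional check that $w_1\tau(w_2)\in\cH^0$ is a harmless and accurate verification of well-definedness.
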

For example, the relation corresponding to \eqref{eq:Hoffman-conj}, 
\[
\zeta^{\diamondsuit}(3,\{2\}^{c-1},1,2)=\zeta^{\diamondsuit}(\{2\}^{c+2})+2\zeta^{\diamondsuit}(3,3,\{2\}^{c-1})
\]
means that, for any positive integer $N$,
\begin{align*}
&\sum_{0<n_1<n_2<\cdots<n_{c}<n_{c+1}<n_{c+2}<N}\frac{1}{n_1^3n_2^2\cdots n_{c}^2n_{c+1}^{}n_{c+2}^2}\\
&+\sum_{0<n_1<n_2<\cdots<n_{c}<n_{c+1}\leq n_{c+2}<N}\frac{1}{n_1^3n_2^2\cdots n_{c}^2(N-n_{c+1}^{})n_{c+2}^2}\\
&=\sum_{0<n_1<\cdots < n_{c+2}<N}\frac{1}{n_1^2\cdots n_{c+2}^2}+2\sum_{0<n_1<n_2<n_3<\cdots<n_{c+1}<N}\frac{1}{n_1^3n_2^3n_3^2\cdots n_{c+1}^2}
\end{align*}
holds.
Letting $N\to\infty$ yields \eqref{eq:Hoffman-conj}.
On the other hand, if we take $N=p$ for a prime $p$ and take modulo $p$, then we have the following relation among \emph{finite multiple zeta values} (FMZVs):
\[
\zeta^{}_{\cA}(3,\{2\}^{c-1},3)+2\zeta^{}_{\cA}(3,3,\{2\}^{c-1})=0.
\]
Here, for positive integers $k_1,\dots, k_r$, the FMZV $\zeta^{}_{\cA}(k_1,\dots, k_r)$ is defined as an element of $\cA\coloneqq\left.\prod_p\ZZ/p\ZZ\right/\bigoplus_p\ZZ/p\ZZ$ by
\[
\zeta^{}_{\cA}(k_1,\dots, k_r)\coloneqq (\zeta^{}_p(k_1,\dots,k_r)\bmod{p})_p,
\]
where $p$ runs over all prime numbers, and
\[
\zeta^{}_p(k_1,\dots,k_r)\coloneqq\sum_{0<n_1<\cdots<n_r<p}\frac{1}{n_1^{k_1}\cdots n_r^{k_r}}.
\]
This definition is due to Kaneko and Zagier (\cite{KanekoZagier2025+}).
We have used a well-known formula: $\zeta^{}_{\cA}(\{2\}^{c+2})=0$.

More generally, applying the same procedure to Corollary~\ref{cor:diamond-lift} (see Section~\ref{sec:finite} for more details), we have the following.
For a word $w=z_{k_1}\cdots z_{k_r}$, we set $\overleftarrow{w}\coloneqq z_{k_r}\cdots z_{k_1}$, and $\wt(w)$ denotes the total degree of $w$, that is, $\wt(w)=k_1+\cdots + k_r$.
A $\ZZ$-linear map $Z_{\cA}\colon \ZZ+y\cH=\ZZ\langle z_1,z_2,\dots\rangle\to\cA$ is defined by $Z_{\cA}(z_{k_1}\cdots z_{k_r})=\zeta^{}_{\cA}(k_1,\dots, k_r)$ and $Z_{\cA}(1)=1$.
\begin{corollary}\label{cor:FMZV}
For words $w_1, w_2\in\cH^{2,3}$, we have
\[
(-1)^{\wt(w_2)}Z_{\cA}(w_1\overleftarrow{w_2})=Z_{\cA}(w_1\star w_2).
\]
\end{corollary}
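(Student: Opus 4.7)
My plan is to derive Corollary~\ref{cor:FMZV} from Corollary~\ref{cor:diamond-lift} by specializing $N=p$ for a prime $p$ and reducing modulo $p$, generalizing the calculation for Hoffman's conjectural identity presented in the introduction.

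For the right-hand side, since $w_1\star w_2\in\cH^{2,3}\subset\cH^{\geq 2}$ is a $\ZZ$-linear combination of words containing no $z_1$, the diamond series $Z^{\diamondsuit}(w_1\star w_2)$ is just a sum of truncated partial MZVs $\zeta^{}_p(\dots)$, and taking $N=p$ modulo $p$ yields $Z_{\cA}(w_1\star w_2)$ with no sign adjustment.

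For the left-hand side, I would first rewrite $w_1\tau(w_2)$ as a word in the $z_k$'s. Writing $w_1=z_{l_1}\cdots z_{l_s}$ and $w_2=z_{k_1}\cdots z_{k_r}$ and using $\tau(z_k)=y^{k-1}x=z_1^{k-2}z_2$ (valid for $k\geq 2$), one computes
\[
w_1\tau(w_2)=z_{l_1}\cdots z_{l_s}\cdot (z_1^{k_r-2}z_2)(z_1^{k_{r-1}-2}z_2)\cdots(z_1^{k_1-2}z_2).
\]
The total number of $z_1$'s is $\sum_{i=1}^{r}(k_i-2)=\wt(w_2)-2r$, whose parity equals that of $\wt(w_2)$. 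I then analyze the diamond evaluation one $z_1$ at a time: each $z_1$ at position $j$ splits the series into a strict piece containing $1/n_j$ and a weakly ordered piece containing $1/(N-n_j)$, and because $(N-n_j)^{-1}\equiv -n_j^{-1}\pmod{p}$, the two pieces combine to $-1$ times the diagonal contribution $n_j=n_{j+1}$---this is exactly the cancellation worked out in the introduction. Iterating within each block $z_1^{k_i-2}z_2$ collapses those positions into a single variable with local exponent $k_i$ and contributes the sign $(-1)^{k_i-2}$. After processing all blocks, the index becomes $(l_1,\dots,l_s,k_r,\dots,k_1)$, i.e.\ $w_1\overleftarrow{w_2}$, and the total sign is $(-1)^{\wt(w_2)-2r}=(-1)^{\wt(w_2)}$, so
\[
Z^{\diamondsuit}(w_1\tau(w_2))\big|_{N=p}\bmod p=(-1)^{\wt(w_2)}Z_{\cA}(w_1\overleftarrow{w_2}),
\]
and comparing with the right-hand side gives the corollary.

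The step requiring the most care is the iterated cancellation inside a single block, because several adjacent $z_1$'s produce compound strict/non-strict conditions rather than a single local inequality, and I need to know that the diamond construction is genuinely compatible with peeling off one $z_1$ at a time. I would settle this by induction on the number of $z_1$'s in the word: at each step I strip off the rightmost $z_1$ of some block, apply the elementary identity $\sum_{n<m}-\sum_{n\leq m}=-\sum_{n=m}$ to the two diamond pieces associated to that $z_1$, and thereby reduce to a word with one fewer $z_1$ and with the local exponent increased by one, picking up the sign $-1$ at each stage. This reduces the proof to the one-$z_1$ case already exhibited in the introduction.
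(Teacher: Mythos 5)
Your proposal is correct in outline and follows the same global strategy as the paper: specialize Corollary~\ref{cor:diamond-lift} at $N=p$, reduce modulo $p$, observe that the right-hand side needs no adjustment because $w_1\star w_2\in\cH^{\geq 2}$, and show that the left-hand side contributes the sign $(-1)^{\wt(w_2)}$ while the index $w_1\tau(w_2)$ collapses to $w_1\overleftarrow{w_2}$. Where you diverge is in how the key congruence for the left-hand side is established. The paper isolates it as Lemma~\ref{lem:diamond-modp}, proved in one stroke by passing from $\zeta^{\diamondsuit}_p$ to its $\diamondsuit\flat$-representation (citing \cite[Corollary~3.10]{HiroseMaesakaSekiWatanabe2025+}), flipping the signs of the factors $(p-n_{i,j})^{-1}$ for $j\geq 2$, and then invoking the duality $\zeta^{\flat}_p=\zeta^{}_p$ of \cite[Theorem~1.3]{MaesakaSekiWatanabe2024+}; this lemma is stated for arbitrary strings of $1$'s. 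You instead work directly from the definition of $\zeta^{\diamondsuit}$ and peel off one $z_1$ at a time via $\sum_{n<m}-\sum_{n\leq m}=-\sum_{n=m}$, which is a more elementary and self-contained route but requires verifying from \cite[Definition~2.3]{HiroseMaesakaSekiWatanabe2025+} (which the paper deliberately omits) that the diamond sum splits at each $1$ into exactly the two pieces you describe and that the merged sum is again a diamond sum of the shorter index, so the induction closes. Note also that the complication you flag about several adjacent $z_1$'s never actually arises here: since $w_2\in\cH^{2,3}$, each block $z_1^{k_i-2}z_2$ contains at most one $z_1$ (namely $k_i=3$), so every $1$ in the index $w_1\tau(w_2)$ is isolated, exactly the situation already illustrated in the introduction. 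In short, your argument buys independence from the $\flat$-machinery at the cost of unpacking the definition of $\zeta^{\diamondsuit}$, whereas the paper's Lemma~\ref{lem:diamond-modp} outsources that work to the cited results and covers a more general class of indices.
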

This is a family of relations among FMZVs in which only 2 and 3 occur as entries.
Since the shuffle relation for FMZVs $(-1)^{\wt(w_2)}Z_{\cA}(w_1\overleftarrow{w_2})=Z_{\cA}(w_1\sh w_2)$ holds (see \cite{KanekoZagier2025+,Ono2017}), Corollary~\ref{cor:FMZV} can also be rewritten as
\[
Z_{\cA}(w_1\sh w_2-w_1\star w_2)=0
\]
which is of the form of a certain double shuffle relation.
\section*{Acknowledgements}
The author would like to thank Professor Minoru Hirose for kindly answering his questions about the previous works by Hirose and Sato.
This research was inspired by a question that came to mind when the author got married, and the author would like to thank his wife, Yui, for marrying him.
\section{The drop 1 operator}
We introduce the following notation to define the main operator.
Let $t$ be a positive integer, let $\bc=(c_1,\dots, c_{2t})$ be a tuple of positive integers, and let $S\subset\ZZ$.
\begin{itemize}[leftmargin=2.5em]
\item $\#A$ denotes the cardinality of a finite set $A$.
\item $[n]\coloneqq\{1,2,\dots,n\}$ for $n\in\ZZ_{\geq 0}$. $[0]=\varnothing$.
\item $[2t]_{\bc}^1\coloneqq\{i\in[2t] \mid c_i=1\}.$
\item $[2t]_{\bc}^{>1}\coloneqq\{i\in[2t] \mid c_i>1\}.$
\item $\cP^{\text{eo}}(S)\coloneqq\{A\subset S \mid i\in A\cap 2\ZZ \Longleftrightarrow i+1\in A\setminus 2\ZZ\}.$
\item $\cP^{\text{oe}}(S)\coloneqq\{A\subset S \mid i\in A\setminus 2\ZZ \Longleftrightarrow i+1\in A\cap 2\ZZ\}.$
\item $\cP^{\smallxcancel{\text{eo}}}(S)\coloneqq\{A\subset S \mid i\in A\cap 2\ZZ \Longrightarrow i+1\not\in A\setminus 2\ZZ\}.$
\item $\cP^{\smallxcancel{\text{oe}}}(S)\coloneqq\{A\subset S \mid i\in A\setminus 2\ZZ \Longrightarrow i+1\not\in A\cap 2\ZZ\}.$
\item $\bc_{(-A)}\coloneqq (c_{i_1},\dots, c_{i_r})$, where $[2t]\setminus A=\{i_1,\dots, i_r\}$ with $i_1<\cdots<i_r$, for $A\subset[2t]$.
\item $\bc_{(-\varnothing)}=\bc$, $\bc_{(-[2t])}=\varnothing$.
\item $\delta_{i\in B}\coloneqq\begin{cases}1 & \text{if } i\in B, \\ 0 & \text{otherwise}\end{cases}$ for a set $B$.
\item $\bc_{(-A)}-\bdelta_B\coloneqq(c_{i_1}-\delta_{i_1\in B},\dots, c_{i_r}-\delta_{i_r\in B})$ for the above $\bc_{(-A)}$ and $B\subset [2t]$.
\end{itemize}

\begin{definition}[Hirose--Maesaka--Seki--Watanabe~\cite{HiroseMaesakaSekiWatanabe2025+}]\label{def:drop1operator}
We define the \emph{drop 1 operator} $\cD\colon\cH^0\to\cH^{\geq 2}$ to be a $\ZZ$-linear map given as follows.
It suffices to define $\cD(w)$ for each word $w\in\cH^0$.
For $\bc=(c_1,\dots, c_{2t})$, we write $\fD(\bc)=\cD(y^{c_1}x^{c_2}\cdots y^{c_{2t-1}}x^{c_{2t}})$.
The image $\fD(\bc)$ is defined inductively as follows:
\begin{align*}
\fD(\bc)&=\sum_{\substack{A\in\cP^{\text{eo}}([2t]^1_{\bc}) \\ B\in\cP^{\smallxcancel{\text{eo}}}([2t]^{>1}_{\bc}) \\ \#A+\#B\geq 1}}(-1)^{\#B-1}\fD(\bc_{(-A)}-\bdelta_B)x^{\#A+\#B}\\
&\quad+\sum_{\substack{A\in\cP^{\text{eo}}([2t]^1_{\bc}) \\ B\in\cP^{\smallxcancel{\text{eo}}}([2t]^{>1}_{\bc}) \\ \#A+\#B\geq 2}}(-1)^{\#B-1}\fD(\bc_{(-A)}-\bdelta_B)z_{\#A+\#B}\\
&\quad+\sum_{\substack{A\in\cP^{\text{oe}}([2t]^1_{\bc}) \\ B\in\cP^{\smallxcancel{\text{oe}}}([2t]^{>1}_{\bc}) \\ \#A+\#B\geq 2}}(-1)^{\#B}\fD(\bc_{(-A)}-\bdelta_B)z_{\#A+\#B}.
\end{align*}
Here the initial condition is $\fD(\varnothing)=\cD(1)=1$.
\end{definition}
\begin{theorem}[Hirose--Maesaka--Seki--Watanabe~\cite{HiroseMaesakaSekiWatanabe2025+}]
The drop 1 operator $\cD$ satisfies the following properties$:$
\label{thm:drop1}
\begin{enumerate}[label=\textnormal{(\roman*)}]
\item\label{it:HMSW1} $\{w-\cD(w) \mid w\in \cH^0\}=\Ker(Z^{\diamondsuit})\subset\Ker(Z)$,
\item\label{it:HMSW2} $\cD(w)=w$ for $w\in\cH^{\geq 2}$,
\item\label{it:HMSW3} $\cD(w)=\cD(\tau(w))$ for $w\in \cH^0$.
\end{enumerate}
\end{theorem}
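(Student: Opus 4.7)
The three properties form the main theorem of \cite{HiroseMaesakaSekiWatanabe2025+}; I outline a proof along the following lines. The central tool is the diamond MZV $\zeta^{\diamondsuit}$, which refines $\zeta$ by a family of $N$-truncated nested sums in which each entry equal to $1$ may be handled either by the usual summand $1/n_i$ or by a boundary ``diamond'' correction $1/(N-n_i)$. The recursive definition of $\fD$ is engineered so that its three summations correspond to three combinatorial expansions of the diamond-truncated sum obtained after peeling off the leftmost block of $w$: an ``$x$-type'' right-end carrying no extra boundary constraint, and two ``$z$-type'' right-ends of opposite parity obtained by merging some summation variables into the boundary via the diamond substitution. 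In these sums, $A$ indexes singleton positions ($c_i=1$) where the diamond alternative is taken, $B$ indexes plural positions ($c_i>1$) where it is \emph{not} taken (allowing an inclusion--exclusion), and the signs $(-1)^{\#B-1}$, $(-1)^{\#B}$ are the corresponding inclusion--exclusion signs.

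The first task is to establish $Z^{\diamondsuit}(w)=Z^{\diamondsuit}(\cD(w))$ for every $w\in\cH^0$. I would proceed by induction on the block length $2t$: for each fixed $N$, expand the $N$-truncated diamond sum for $w$ by peeling off the block $y^{c_1}x^{c_2}$, reorganise the result by grouping the summation variables that collapse into diamond-corrected boundaries, and identify each resulting group with one $(A,B)$-term from the recursion defining $\fD$. This yields $\{w-\cD(w)\}\subset\Ker(Z^{\diamondsuit})$. Since $w=\cD(w)+(w-\cD(w))$ with $\cD(w)\in\cH^{\geq 2}$, we have $\cH^0=\cH^{\geq 2}+\{w-\cD(w)\}$, so the reverse inclusion $\Ker(Z^{\diamondsuit})\subset\{w-\cD(w)\}$ reduces to injectivity of $Z^{\diamondsuit}$ restricted to $\cH^{\geq 2}$. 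The outer inclusion $\Ker(Z^{\diamondsuit})\subset\Ker(Z)$ is obtained by letting $N\to\infty$, which reproduces $Z(w)$ on $\cH^0$.

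Property~(ii) I would prove directly from the recursion: for $w\in\cH^{\geq 2}$ the odd positions all lie in $[2t]^1_{\bc}$, which sharply constrains the admissible pairs $(A,B)$, and an explicit sign-reversing involution toggling inclusion of a single ``boundary'' position cancels all summands except one, which reproduces $w$ itself. (Alternatively, once $Z^{\diamondsuit}(w)=Z^{\diamondsuit}(\cD(w))$ and injectivity on $\cH^{\geq 2}$ are available, (ii) is immediate.) For~(iii) I would exhibit a sign- and weight-preserving bijection between the $(A,B)$-data for $\bc$ and those for the reversed tuple $(c_{2t},\dots,c_1)$ swapping the two $z$-type sums and fixing the $x$-type sum; this reflects the fact that block reversal interchanges $\cP^{\text{eo}}$ with $\cP^{\text{oe}}$ and their cancelled-superscript analogues. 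The principal technical obstacle is the injectivity of $Z^{\diamondsuit}$ on $\cH^{\geq 2}$: it does not follow from the recursion alone and requires a separate rank computation at each weight, which is the heart of HMSW.
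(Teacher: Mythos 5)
The paper does not actually prove parts \ref{it:HMSW1} and \ref{it:HMSW2}: they are imported verbatim from \cite{HiroseMaesakaSekiWatanabe2025+}. The only original content in the paper's treatment of this theorem is the derivation of \ref{it:HMSW3}, which goes through \ref{it:HMSW1} and \ref{it:HMSW2} abstractly: since $w-\tau(w)\in\Ker(Z^{\diamondsuit})$ (diamond duality, noted in the cited paper), part \ref{it:HMSW1} gives $w-\tau(w)=v-\cD(v)$ for some $v\in\cH^0$, and then $\cD(w-\tau(w))=\cD(v)-\cD(\cD(v))=0$ by \ref{it:HMSW2} because $\cD(v)\in\cH^{\geq 2}$. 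Your sketch of \ref{it:HMSW1} has the right architecture --- interpret the three sums in the recursion as an inclusion--exclusion expansion of the $N$-truncated diamond sums, deduce $Z^{\diamondsuit}\circ\cD=Z^{\diamondsuit}$, and reduce the reverse inclusion (and, if desired, \ref{it:HMSW2}) to the triviality of $\Ker(Z^{\diamondsuit})\cap\cH^{\geq 2}$ --- and you correctly flag that this last linear-independence statement is the deep step you are not supplying. As a standalone proof the proposal is therefore incomplete at exactly the point where the cited reference does the real work; as a reading of how the theorem is used here, it is more than the paper itself attempts.

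For \ref{it:HMSW3} your proposed direct bijection does not work as described. Reversal $i\mapsto 2t+1-i$ sends an adjacent pair $\{2k,2k+1\}$ to $\{2t-2k,2t-2k+1\}$, which is again an even--odd pair, so block reversal does \emph{not} interchange $\cP^{\text{eo}}$ with $\cP^{\text{oe}}$. Moreover the two $z$-type sums carry opposite signs $(-1)^{\#B-1}$ and $(-1)^{\#B}$, so a bijection ``swapping'' them would have to flip the parity of $\#B$ to be sign-preserving, which you do not arrange; and since the recursion appends $x^{\#A+\#B}$ or $z_{\#A+\#B}$ on the \emph{right}, the recursion for the reversed tuple peels off what was the left end of $\bc$, so there is no term-by-term correspondence between the two recursions without a genuinely new inductive argument. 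A direct inductive proof of \ref{it:HMSW3} is possible (the paper asserts as much), but the mechanism is not the one you describe; the paper's derivation from \ref{it:HMSW1}, \ref{it:HMSW2}, and diamond duality avoids the combinatorics entirely and is the argument you should compare against.
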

In particular, the identity $Z(w)=Z(\cD(w))$ implies that any MZV can be expanded as a $\ZZ$-linear combination of $\{\zeta(k_1,\dots, k_r) \mid r\in\ZZ_{>0}, k_1,\dots, k_r\geq 2\}$.
Theorem~\ref{thm:drop1}~\ref{it:HMSW3} is not mentioned explicitly in \cite{HiroseMaesakaSekiWatanabe2025+}, but it follows easily by induction from the definition; it can also be proved as follows.
As noted in \cite[the last sentence of Section~3]{HiroseMaesakaSekiWatanabe2025+}, one has $w-\tau(w)\in\Ker(Z^{\diamondsuit})$.
Hence, by \ref{it:HMSW1}, there exists $v\in \cH^0$ such that $w-\tau(w)=v-\cD(v)$.
Therefore, by \ref{it:HMSW2}, we have $\cD(w-\tau(w))=\cD(v-\cD(v))=0$.
Although it is beyond the scope of this paper, we state the following conjecture.
\begin{conjecture}
For words $w, w'$ in $\cH^0$, if $\cD(w)=\cD(w')$, then either $w'=w$ or $w'=\tau(w)$.
\end{conjecture}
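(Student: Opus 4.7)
The plan is to attempt the conjecture by extracting from $\cD(w)$ an invariant that determines $w$ up to $\tau$.

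First, I would recast the statement using Theorem~\ref{thm:drop1}. Since $\cD$ is the identity on $\cH^{\geq 2}$ by Theorem~\ref{thm:drop1}~\ref{it:HMSW2}, the map $\cD$ is idempotent, so $\Ker(\cD)=\{w-\cD(w)\mid w\in\cH^0\}$, which equals $\Ker(Z^{\diamondsuit})$ by Theorem~\ref{thm:drop1}~\ref{it:HMSW1}. Hence $\cD(w)=\cD(w')$ is equivalent to $Z^{\diamondsuit}(w)=Z^{\diamondsuit}(w')$, and the conjecture reduces to the claim that whenever two words $w,w'\in\cH^0$ satisfy $Z^{\diamondsuit}(w)=Z^{\diamondsuit}(w')$, we have $w'\in\{w,\tau(w)\}$.

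Next, I would attempt a leading-term analysis. Writing $w=y^{c_1}x^{c_2}\cdots y^{c_{2t-1}}x^{c_{2t}}$, I would fix a total preorder on words in $\cH^{\geq 2}$ (for example, first by depth, then by the multiset of exponents of the $z$-factors, and finally lexicographically) and identify the unique maximal monomial appearing in $\fD(\bc)=\cD(w)$. The recursive definition of $\fD$ suggests that, by choosing $A\cup B$ as large as allowable at each step, one arrives at a distinguished monomial whose combinatorial shape records enough about $\bc$ to recover either $\bc$ or its reverse. A natural first target is the case $w_1,w_2\in\cH^{2,3}$: there Theorem~\ref{thm:main} provides a completely explicit closed form $w_1\star w_2$ lying in $\cH^{2,3}$, and one should be able to read off the tuples defining $w_1$ and $w_2$ directly from the coefficients of suitable Hoffman words. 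The third step would then be an induction on total weight, using the leading-term invariant to conclude that if $\cD(w)=\cD(w')$ then the extremal monomials coincide, forcing $w'\in\{w,\tau(w)\}$.

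The principal obstacle, as I see it, is the substantial cancellation produced by the signs $(-1)^{\#B-1}$ and $(-1)^{\#B}$ across the three sums defining $\fD(\bc)$: isolating a monomial whose coefficient survives cancellation and whose shape records $\bc$ unambiguously (up to reversal) is delicate. More fundamentally, the reformulation above shows that the conjecture entails a strong $\ZZ$-linear independence statement for the values of $Z^{\diamondsuit}$ on pairs of admissible words; via specialization this would yield non-trivial linear-independence statements for MZVs and FMZVs, so a full proof appears to be well beyond reach with current techniques. For this reason, the realistic goal is probably to settle the conjecture on the subring $\cH^{2,3}$ using Theorem~\ref{thm:main}, and to regard the general conjecture as a plausible consequence of future progress on diamond-value linear independence.
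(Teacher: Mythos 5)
This statement is left \emph{open} in the paper: it is stated as a conjecture, explicitly flagged as ``beyond the scope of this paper,'' and the only thing the author establishes about it is a conditional reduction --- since $\Ker(Z^{\diamondsuit})\subset\Ker(Z)$ by Theorem~\ref{thm:drop1}~\ref{it:HMSW1}, the conjecture would follow from the Borwein--Bradley--Broadhurst--Lison\v{e}k conjecture for $Z$. So there is no proof in the paper to compare yours against, and your proposal, by its own admission, does not supply one either: the ``leading-term analysis'' is only a plan (no invariant is actually constructed, no surviving monomial is exhibited against the sign cancellations you correctly identify), and the final paragraph concedes the argument cannot be completed. Your opening reformulation is correct and worth keeping: $\cD$ is idempotent by \ref{it:HMSW2}, so $\Ker(\cD)$ is the image of $1-\cD$, which equals $\Ker(Z^{\diamondsuit})$ by \ref{it:HMSW1}; hence $\cD(w)=\cD(w')$ is indeed equivalent to $Z^{\diamondsuit}(w)=Z^{\diamondsuit}(w')$.

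However, your assessment of what the conjecture \emph{entails} runs in the wrong direction. Because $\Ker(Z^{\diamondsuit})\subset\Ker(Z)$, the hypothesis $Z^{\diamondsuit}(w)=Z^{\diamondsuit}(w')$ is \emph{stronger} than $Z(w)=Z(w')$, so the present conjecture is implied by the BBBL conjecture for MZVs and does not itself yield any linear-independence statement for MZVs or FMZVs. Moreover, $\cD$ is an explicitly computable integer-coefficient operator, so the conjecture is a purely combinatorial statement about $\fD$, not a transcendence statement; your conclusion that it is ``well beyond reach with current techniques'' for that reason is not justified. Finally, even your proposed fallback --- settling the case $w=w_1\tau(w_2)$, $w'=w_1'\tau(w_2')$ with $w_i,w_i'\in\cH^{2,3}$ via Theorem~\ref{thm:main} --- is incomplete as stated: one would still need both an injectivity statement for $(w_1,w_2)\mapsto w_1\star w_2$ up to the $\tau$-symmetry and a way to rule out collisions with $\cD(w')$ for $w'$ \emph{not} of this form, and the latter is precisely Conjecture~\ref{conj:inverse}, which is also open.
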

The corresponding statement with $\cD$ replaced by $Z$ was conjectured by Borwein, Bradley, Broadhurst, and Lison\v{e}k in \cite{BBBL}.
By \ref{it:HMSW1}, if their conjecture is true, then the above conjecture is also true.
\section{Proof of Theorem~\ref{thm:main}}
\begin{proof}[Proof of Theorem~$\ref{thm:main}$]
It suffices to consider the case where $w_1$ and $w_2$ are words, and we prove the claim by induction on $\wt(w_1)+\wt(w_2)$.
If $w_1=1$ or $w_2=1$, then the claim is clear from Theorem~\ref{thm:drop1}~\ref{it:HMSW2} and \ref{it:HMSW3}.
For non-negative integers $r$, $s$, and positive integers $a_1,\dots, a_r$, $b_1, \dots, b_s$, $c$, it suffices to consider the case
\[
w_1\tau(w_2)=z_2^{a_1-1}z_3\cdots z_2^{a_r-1}z_3z_2^{c-1}yz_2^{b_1}\cdots yz_2^{b_s},
\]
without loss of generality.
Let
\begin{align*}
\bc&\coloneqq [a_1,\dots, a_r;c;b_1,\dots, b_s]\\
&\coloneqq(\{1\}^{2a_1-1},2,\dots,\{1\}^{2a_r-1},2,\{1\}^{2c-2},2,\{1\}^{2b_1-1},\dots, 2,\{1\}^{2b_s-1}).
\end{align*}
Then $\cD(w_1\tau(w_2))=\fD(\bc)$ holds.
When $r=0$, read as
\[
\bc=[\varnothing;c;b_1,\dots, b_s]=(\{1\}^{2c-2},2,\{1\}^{2b_1-1},\dots, 2,\{1\}^{2b_s-1});
\]
similarly for $s=0$.
For a set $A\subset\ZZ$ and an integer $l$, we use the notation $A+l\coloneqq\{a+l\mid a\in A\}$.
Let $l_1\coloneqq a_1$, $l_2\coloneqq a_1+a_2$, $\dots$, $l_r\coloneqq a_1+\cdots+a_r$, $m_1\coloneqq b_1$, $m_2\coloneqq b_1+b_2$, $\dots$, $m_s\coloneqq b_1+\cdots+b_s$, and $t\coloneqq l_r+m_s+c-1$.
When $r=0$ (resp.~$s=0$), $l_r\coloneqq 0$ (resp.~$m_s\coloneqq0$).
Since we are considering the induction step, we may assume that $t>0$.
Note that $\wt(w_1\tau(w_2))$ equals $2t+r+s$, not $t$.
In this situation, $[2t]^{>1}_{\bc}=\{2l_1,2l_2,\dots, 2l_r\}\cup\bigl(\{0,2m_1, 2m_2, \dots, 2m_{s-1}\}+(2l_r+2c-1)\bigr)$ and $\cP^{\smallxcancel{\text{oe}}}([2t]^{>1}_{\bc})=\cP([2t]^{>1}_{\bc})$ hold, where $\cP([2t]^{>1}_{\bc})$ denotes the power set of $[2t]^{>1}_{\bc}$.

We consider the case $c>1$.
Then $\cP^{\smallxcancel{\text{eo}}}([2t]^{>1}_{\bc})=\cP([2t]^{>1}_{\bc})$ holds.
For $A\in\cP^{\text{eo}}([2t]^1_{\bc})$, decompose it as $A=A_1\cup A_2 \cup A_3$, where
\[
A_1\coloneqq A \cap [2l_r-1],\quad A_2\coloneqq A\cap ([2l_r+2c-2]\setminus [2l_r]),\quad A_3\coloneqq A\setminus [2l_r+2c-1].
\]
We define a map $\iota_1\colon\cP^{\text{eo}}([2t]^1_{\bc})\to\cP^{\text{oe}}([2t]^1_{\bc})$ by
\[
\iota_1(A)\coloneqq (A_1-1)\cup (A_2-1)\cup\{2l_r+2c-3,2l_r+2c-2\}\cup(A_3+1)
\]
and a map $\iota_2\colon\cP^{\text{eo}}([2t]^1_{\bc})\to\cP^{\text{oe}}([2t]^1_{\bc})$ by
\[
\iota_2(A)\coloneqq (A_1-1)\cup (A_2-1)\cup(A_3+1).
\]
A bijection $\cP^{\text{eo}}([2t]^1_{\bc})\sqcup\cP^{\text{eo}}([2t]^1_{\bc})\to\cP^{\text{oe}}([2t]^1_{\bc})$ is obtained by sending an element $A$ in the first copy of $\cP^{\text{eo}}([2t]^1_{\bc})$ to $\iota_1(A)$, and an element $A$ in the second copy of $\cP^{\text{eo}}([2t]^1_{\bc})$ to $\iota_2(A)$.
For $A\in\cP^{\text{eo}}([2t]^1_{\bc})$, we have $\bc_{(-A)}=\bc_{(-\iota_2(A))}$.
Therefore, if we compute the defining recurrence for the drop 1 operator using this bijection, every term in the second sum is canceled by a term in the third sum, and we are left with
\begin{align*}
\fD(\bc)&=\sum_{\substack{A\in\cP^{\text{eo}}([2t]^1_{\bc}) \\ B\in\cP([2t]^{>1}_{\bc}) \\ \#A+\#B\geq 1}}(-1)^{\#B-1}\biggl(\fD(\bc_{(-A)}-\bdelta_B)-\fD(\bc_{(-\iota_1(A))}-\bdelta_B)z_2\biggr)x^{\#A+\#B}\\
&\quad+\fD(\bc_{(-\{2l_r+2c-3,2l_r+2c-2\})})z_2.
\end{align*}
For $A\in\cP^{\text{eo}}([2t]^1_{\bc})$ or $\cP^{\text{oe}}([2t]^1_{\bc})$, there exist $a'_1$, $\dots$, $a'_r$, $b'_1$, $\dots$, $b'_s$, and $c'$ such that
\[
\bc_{(-A)}=[a'_1,\dots, a'_r;c';b'_1,\dots, b'_s].
\]
Here $a'_1\in[a_1]$, $\dots$, $a'_r\in[a_r]$, $b'_1\in[b_1]$, $\dots$, $b'_s\in[b_s]$, $c'\in[c]$, and $a'_1+\cdots+a'_r+b'_1+\cdots+b'_s+c'=t+1-\#A/2$.
Moreover, upon adding $-\bdelta_B$, addition occurs between the corresponding components; for example, when
\begin{align*}
\bc&=[2,1,3;3;4,1,1,2]\\
&=(\stackrel{1}{1},\stackrel{2}{1},\stackrel{3}{1},\stackrel{\color{red}{4}}{2},\stackrel{5}{1},\stackrel{\color{red}{6}}{2},\stackrel{7}{1},\stackrel{8}{1},\stackrel{9}{1},\stackrel{10}{1},\stackrel{11}{1},\stackrel{\color{red}{12}}{2},\stackrel{13}{1},\stackrel{14}{1},\stackrel{15}{1},\stackrel{16}{1},\stackrel{\color{red}{17}}{2},\stackrel{18}{1},\stackrel{19}{1},\stackrel{20}{1},\stackrel{21}{1},\stackrel{22}{1},\stackrel{23}{1},\stackrel{24}{1},\stackrel{\color{red}{25}}{2},\stackrel{26}{1},\stackrel{\color{red}{27}}{2},\stackrel{28}{1},\stackrel{\color{red}{29}}{2},\stackrel{30}{1},\stackrel{31}{1},\stackrel{32}{1}),
\end{align*}
\[
A=\{2,3,8,9,14,15,18,19,22,23,30,31\}\in\cP^{\text{eo}}([32]^1_{[2,1,3;3;4,1,1,2]}),
\]
and
\[
B=\{4,17,27\}\subset[32]^{>1}_{[2,1,3;3;4,1,1,2]}=\{4,6,12,17,25,27,29\},
\]
we have
\begin{align*}
&[2,1,3;3;4,1,1,2]_{(-\{2,3,8,9,14,15,18,19,22,23,30,31\})}=[1,1,2;2;2,1,1,1]\\
&=(\stackrel{1}{1},\stackrel{\color{red}{4}}{2},\stackrel{5}{1},\stackrel{\color{red}{6}}{2},\stackrel{7}{1},\stackrel{10}{1},\stackrel{11}{1},\stackrel{\color{red}{12}}{2},\stackrel{13}{1},\stackrel{16}{1},\stackrel{\color{red}{17}}{2},\stackrel{20}{1},\stackrel{21}{1},\stackrel{24}{1},\stackrel{\color{red}{25}}{2},\stackrel{26}{1},\stackrel{\color{red}{27}}{2},\stackrel{28}{1},\stackrel{\color{red}{29}}{2},\stackrel{32}{1})
\end{align*}
and
\begin{align*}
&[2,1,3;3;4,1,1,2]_{(-\{2,3,8,9,14,15,18,19,22,23,30,31\})}-\bdelta_{\{4,17,27\}}\\
&=[1+1,2;2+2;1+1,1]=[2,2;4;2,1]\\
&=(\stackrel{1}{1},\stackrel{\color{blue}{4}}{1},\stackrel{5}{1},\stackrel{\color{red}{6}}{2},\stackrel{7}{1},\stackrel{10}{1},\stackrel{11}{1},\stackrel{\color{red}{12}}{2},\stackrel{13}{1},\stackrel{16}{1},\stackrel{\color{blue}{17}}{1},\stackrel{20}{1},\stackrel{21}{1},\stackrel{24}{1},\stackrel{\color{red}{25}}{2},\stackrel{26}{1},\stackrel{\color{blue}{27}}{1},\stackrel{28}{1},\stackrel{\color{red}{29}}{2},\stackrel{32}{1}).
\end{align*}
Therefore, for any $A, B$ with $\#A+\#B\geq 1$, the induction hypothesis implies that there exist words $v_1, v_2$ in $\cH^{2,3}$ such that $\bc_{(\iota_1(-A))}-\bdelta_B$ corresponds to $v_1\tau(v_2)$, and we have
\[
\fD(\bc_{(-A)}-\bdelta_B)-\fD(\bc_{(-\iota_1(A))}-\bdelta_B)z_2=\cD(v_1z_2\tau(v_2))-\cD(v_1\tau(v_2))z_2=v_1z_2\star v_2-(v_1\star v_2)z_2=0.
\]
Since we are in the case $c>1$, we can write $w_1=wz_2$ with a certain word $w$ in $\cH^{2,3}$.
Then we obtain
\begin{align*}
\fD(\bc)&=\fD(\bc_{(-\{2l_r+2c-3,2l_r+2c-2\})})z_2=\fD([a_1,\dots, a_r;c-1;b_1,\dots, b_s])z_2=\cD(w\tau(w_2))z_2\\
&=(w\star w_2)z_2=wz_2\star w_2=w_1\star w_2.
\end{align*}

It remains to consider the case $c=1$.
In this case, we have $\cP^{\smallxcancel{\text{eo}}}([2t]^{>1}_{\bc})=\{B\in\cP([2t]^{>1}_{\bc})\mid\{2l_r,2l_r+1\}\not\subset B\}$.
For $A\in\cP^{\text{eo}}([2t]^1_{\bc})$, decompose it as $A=A_1\cup A_3$, where
\[
A_1\coloneqq A \cap [2l_r-1],\quad A_3\coloneqq A\setminus [2l_r+1].
\]
We define a map $\iota_2\colon\cP^{\text{eo}}([2t]^1_{\bc})\to\cP^{\text{oe}}([2t]^1_{\bc})$ by
\[
\iota_2(A)\coloneqq (A_1-1)\cup(A_3+1).
\]
Since $\iota_2$ is a bijection in this case, we compute 
\begin{align*}
\fD(\bc)&=\sum_{\substack{A\in\cP^{\text{eo}}([2t]^1_{\bc}) \\ B\in\cP([2t]^{>1}_{\bc}) \\ \{2l_r,2l_r+1\}\not\subset B \\ \#A+\#B\geq 1}}(-1)^{\#B-1}\fD(\bc_{(-A)}-\bdelta_B)x^{\#A+\#B}\\
&\quad+\sum_{\substack{A\in\cP^{\text{oe}}([2t]^1_{\bc}) \\ B\in\cP([2t]^{>1}_{\bc}) \\ \{2l_r,2l_r+1\}\subset B}}(-1)^{\#B}\fD(\bc_{(-A)}-\bdelta_B)z_{\#A+\#B}\\
&=\sum_{\substack{A\in\cP^{\text{eo}}([2t]^1_{\bc}) \\ B\in\cP([2t]^{>1}_{\bc}) \\ 2l_r,2l_r+1\not\in B \\ \#A+\#B\geq 1}}(-1)^{\#B-1}F(\bc,A,B)x^{\#A+\#B}\\
&\quad+\fD(\bc-\bdelta_{\{2l_r\}})x+\fD(\bc-\bdelta_{\{2l_r+1\}})x+\fD(\bc-\bdelta_{\{2l_r,2l_r+1\}})z_2,
\end{align*}
where
\begin{align*}
F(\bc,A,B)&\coloneqq\fD(\bc_{(-A)}-\bdelta_B)-\fD(\bc_{(-A)}-\bdelta_{B\cup\{2l_r\}})x-\fD(\bc_{(-A)}-\bdelta_{B\cup\{2l_r+1\}})x\\
&\quad -\fD(\bc_{(-A)}-\bdelta_{B\cup\{2l_r,2l_r+1\}})z_2.
\end{align*}
Since we may assume that $r\geq 1$ and $s\geq 1$, when $2l_r, 2l_r+1\not\in B$, the same argument as in the previous case shows that $\bc_{(-A)}-\bdelta_B$ takes the form
\[
\bc_{(-A)}-\bdelta_B=[a'_1,\dots,a'_{r'};1;b'_1,\dots,b'_{s'}],
\]
where $r'$, $s'$, $a'_1$, $\dots$, $a'_{r'}$, and $b'_1$, $\dots$, $b'_{s'}$ are suitable positive integers.
Therefore, for any $A, B$ with $2l_r, 2l_r+1\not\in B$ and $\#A+\#B\geq 1$, the induction hypothesis implies that there exist words $v_1, v_2$ in $\cH^{2,3}$ such that
\begin{align*}
\fD(\bc_{(-A)}-\bdelta_B)&=\cD(v_1z_3\tau(v_2z_3))\\
&=v_1z_3\star v_2z_3\\
&=(v_1\star v_2z_3)z_3+(v_1z_3\star v_2)z_3+(v_1\star v_2)z_2^3\\
&=(v_1z_2\star v_2z_3)x+(v_1z_3\star v_2z_2)x+(v_1z_2\star v_2z_2)z_2\\
&=\fD(\bc_{(-A)}-\bdelta_{B\cup\{2l_r\}})x+\fD(\bc_{(-A)}-\bdelta_{B\cup\{2l_r+1\}})x\\
&\quad+\fD(\bc_{(-A)}-\bdelta_{B\cup\{2l_r,2l_r+1\}})z_2,
\end{align*}
that is, $F(\bc,A,B)=0$.
Since we are in the case $c=1$, $r\geq 1$, and $s\geq 1$, we can write $w_1=wz_3$ and $w_2=w'z_3$ with certain words $w, w'$ in $\cH^{2,3}$.
Then we obtain
\begin{align*}
\fD(\bc)&=\fD(\bc-\bdelta_{\{2l_r\}})x+\fD(\bc-\bdelta_{\{2l_r+1\}})x+\fD(\bc-\bdelta_{\{2l_r,2l_r+1\}})z_2\\
&=\fD([a_1,\dots, a_{r-1};a_r+1;b_1,\dots,b_s])x+\fD([a_1,\dots,a_r;1+b_1;b_2,\dots,b_s])x\\
&\quad+\fD([a_1,\dots,a_{r-1};a_r+1+b_1;b_2,\dots,b_s])z_2\\
&=\cD(wz_2\tau(w'z_3))x+\cD(wz_3\tau(w'z_2))x+\cD(wz_2\tau(w'z_2))z_2\\
&=(wz_2\star w'z_3)x+(wz_3\star w'z_2)x+(wz_2\star w'z_2)z_2\\
&=(w\star w'z_3)z_3+(wz_3\star w')z_3+(w\star w')z_2^3\\
&=wz_3\star w'z_3=w_1\star w_2.
\end{align*}
This completes the proof.
\end{proof}
In relation to the main result, we can consider the following conjecture.
\begin{conjecture}\label{conj:inverse}
For any word $w$ in $\cH^0$ that cannot be written in the form $w_1\tau(w_2)$ for words $w_1, w_2$ in $\cH^{2,3}$, we have $\cD(w)\not\in\cH^{2,3}$.
\end{conjecture}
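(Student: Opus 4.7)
The plan is to establish the contrapositive: if $\cD(w)\in\cH^{2,3}$, then $w=w_1\tau(w_2)$ for some words $w_1,w_2\in\cH^{2,3}$. Writing $w=y^{c_1}x^{c_2}\cdots y^{c_{2t-1}}x^{c_{2t}}$, a direct check shows that $w$ has the form $w_1\tau(w_2)$ with both factors in $\cH^{2,3}$ if and only if all $c_i\in\{1,2\}$ and there exists an index $r\in\{0,1,\dots,t\}$ with $c_i=1$ for every odd $i\leq 2r-1$ and $c_j=1$ for every even $j\geq 2r+2$; equivalently, every even position carrying a $2$ lies strictly to the left of every odd position carrying a $2$. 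The argument then proceeds by strong induction on $\wt(w)$, the base case $w=1$ being trivial.

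For the inductive step, I would apply the defining recurrence to $\fD(\bc)$ and analyze, term by term, which monomials can survive in $\cH^{2,3}$ after all cancellations. Each summand is $\fD(\bc_{(-A)}-\bdelta_B)\cdot u$ with $u=x^{\#A+\#B}$ or $u=z_{\#A+\#B}$, and the terminal $z_k$-letter of such a contribution is obtained either by absorbing the appended $x^{\#A+\#B}$ into the trailing $z_\ell$ of $\fD(\bc_{(-A)}-\bdelta_B)$, yielding $z_{\ell+\#A+\#B}$, or else it is the adjoined $z_{\#A+\#B}$ itself. Under the hypothesis $\cD(w)\in\cH^{2,3}$, every surviving terminal letter must be $z_2$ or $z_3$, forcing $\ell+\#A+\#B\leq 3$ (respectively $\#A+\#B\leq 3$) on each non-cancelling term. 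The plan is to translate these constraints into restrictions on $\bc$, apply the inductive hypothesis to each subsequence $\bc_{(-A)}-\bdelta_B$, and then reconstruct an admissible pivot $r$ for $\bc$ from those of its subsequences.

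The main obstacle is controlling the signed cancellations inherent in the three sums that define $\cD$; the proof of Theorem~\ref{thm:main} already hinges on delicate matchings between terms (via $\iota_1,\iota_2$), and here one must establish the converse, namely that in the non-admissible case no analogous matching can eliminate every $z_k$-letter with $k\geq 4$. When some $c_i\geq 3$, the index $i$ cannot lie in any $A\subset[2t]^1_\bc$, and one expects an extremal choice $(A,B)$ with $i\in B$ to produce an internal $z_k$ with $k\geq 4$ that has no cancellation partner. When all $c_i\in\{1,2\}$ but the monotonicity fails at some pair $i<j$ with $i$ odd, $j$ even, and $c_i=c_j=2$, a parallel extremal analysis is needed to show that this "wrong-side" configuration cannot be repaired by any $\bdelta_B$-subtraction without producing a fresh $c\geq 3$ elsewhere. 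Formulating such a non-cancellation argument in full generality will likely require an explicit description of which monomials in $\cH^{\geq 2}$ appear with non-zero coefficient in $\cD(w)$ --- a stronger combinatorial identity of which Conjecture~\ref{conj:inverse} would be a corollary --- which likely explains why the statement is left as a conjecture in this paper.
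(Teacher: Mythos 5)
The statement you were assigned is Conjecture~\ref{conj:inverse}: the paper does not prove it. The only thing the paper offers is a conditional remark, namely that Hirose and Sato conjectured (\cite[Remark~15]{HiroseSato2022+}) that integer-coefficient Hoffman-basis expansions occur only for the family of Theorem~\ref{thm:HiroseSato2}, and that Conjecture~\ref{conj:inverse} would follow from that via Theorem~\ref{thm:drop1}~\ref{it:HMSW1}. So there is no proof in the paper to compare yours against, and your closing sentence correctly diagnoses the situation.

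As a proof attempt, your proposal has a genuine gap, which you yourself flag: everything of substance is concentrated in the deferred step, namely showing that when $\bc$ is not of the admissible shape, some monomial involving a letter $z_k$ with $k\geq 4$ (or otherwise outside $\ZZ\langle z_2,z_3\rangle$) survives all the signed cancellations among the three sums defining $\fD$. The preliminary reductions are fine --- passing to the contrapositive, and the characterization of words of the form $w_1\tau(w_2)$ as those with all exponents in $\{1,2\}$ and every even-position $2$ to the left of every odd-position $2$, match the shape $z_2^{a_1-1}z_3\cdots z_2^{a_r-1}z_3z_2^{c-1}yz_2^{b_1}\cdots yz_2^{b_s}$ used in the paper. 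But two further problems remain even granting the frame. First, membership in $\cH^{2,3}$ constrains \emph{every} letter of every surviving monomial, not just the terminal one; the non-terminal letters come from the recursive values $\fD(\bc_{(-A)}-\bdelta_B)$, which are $\ZZ$-linear combinations of words rather than words, so an inductive hypothesis phrased for words does not directly apply to them. Second, what is needed is a \emph{non}vanishing statement (a coefficient in $\cD(w)$ that provably survives), which is qualitatively harder than the exact-cancellation identities established in the proof of Theorem~\ref{thm:main}; no mechanism for producing such a surviving term is supplied. The proposal therefore leaves the statement where the paper leaves it: as an open conjecture.
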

Hirose and Sato conjectured in \cite[Remark~15]{HiroseSato2022+} that the expansion of an MZV in the Hoffman basis has integer coefficients only for the expansion provided by Theorem~\ref{thm:HiroseSato2}.
If their conjecture is true, then Conjecture~\ref{conj:inverse} also follows.
\section{Proof of Corollary~\ref{cor:FMZV}}\label{sec:finite}
For the definitions of $\zeta^{\diamondsuit}_N$, $\zeta^{\flat}_N$, and $\zeta^{\diamondsuit\flat}_N$, see \cite[Definitions~2.3, 3.4, and 3.9]{HiroseMaesakaSekiWatanabe2025+}.
Here we take $N=p$.
\begin{lemma}\label{lem:diamond-modp}
Let $p$ be a prime and $s$ be a positive integer.
Let $a_1,\dots, a_s, b_1,\dots, b_s$ be positive integers.
Set $l\coloneqq a_1+\cdots+a_s-s$.
Then, we have
\[
\zeta^{\diamondsuit}_p(\{1\}^{a_1-1},b_1+1,\dots, \{1\}^{a_s-1},b_s+1)\equiv(-1)^l\zeta^{}_p(a_1+b_1,\dots, a_s+b_s)\pmod{p}.
\]
\end{lemma}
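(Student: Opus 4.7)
The idea is to unpack the definition of $\zeta^{\diamondsuit}_N$ from \cite[Definition~2.3]{HiroseMaesakaSekiWatanabe2025+} at $N=p$, reduce every reflected factor via $(p-n)^{-1}\equiv -n^{-1}\pmod p$, and then collect terms by a signed inclusion--exclusion that forces the equality $n_i=n_{i+1}$ at every position of a $1$-entry. Each such equality contracts a $1$-entry into its right neighbour, so each block $(\{1\}^{a_i-1},b_i+1)$ of the index collapses to a single variable with exponent $a_i+b_i$; the total number of contractions is $\sum_i(a_i-1)=l$, and each contributes a sign $-1$, producing $(-1)^l\zeta_p(a_1+b_1,\dots,a_s+b_s)$.

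Set $\mathbf{k}=(\{1\}^{a_1-1},b_1+1,\dots,\{1\}^{a_s-1},b_s+1)$ and let $I_1$ denote the set of positions $i$ with $k_i=1$, so that $|I_1|=l$. First I would recall that the diamond value admits an expansion of the form
\[
\zeta^{\diamondsuit}_N(\mathbf{k})=\sum_{S\subseteq I_1}\;\sum_{\boldsymbol{n}\in\Delta_S(N)}\prod_{i\notin S}\frac{1}{n_i^{k_i}}\prod_{i\in S}\frac{1}{N-n_i},
\]
where $\Delta_S(N)$ is the chain $0<n_1\diamond_1 n_2\diamond_2\cdots\diamond_{r-1}n_r<N$ whose $i$-th inequality $\diamond_i$ is $\le$ if $i\in S$ and $<$ otherwise. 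The two summands displayed on page~2 for $\zeta^{\diamondsuit}(3,\{2\}^{c-1},1,2)$ are precisely the $S=\varnothing$ and $S=\{c+1\}$ terms of this expansion.

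Specialising to $N=p$ and reducing modulo $p$, every reflected factor flips sign, so
\[
\zeta^{\diamondsuit}_p(\mathbf{k})\equiv\sum_{S\subseteq I_1}(-1)^{|S|}\sum_{\boldsymbol{n}\in\Delta_S(p)}\prod_{i=1}^r\frac{1}{n_i^{k_i}}\pmod p.
\]
The inequalities at non-$1$ positions are identical across every $\Delta_S(p)$, so interchanging summations and using $[n_i<n_{i+1}]-[n_i\le n_{i+1}]=-[n_i=n_{i+1}]$ at each $i\in I_1$ extracts the overall factor $(-1)^l\prod_{i\in I_1}[n_i=n_{i+1}]$ in front of the fixed strict inequalities at the non-$1$ positions. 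Enforcing these equalities contracts each block $(\{1\}^{a_i-1},b_i+1)$ to a single variable $m_i$ of total exponent $(a_i-1)+(b_i+1)=a_i+b_i$, and the surviving chain reads $0<m_1<\cdots<m_s<p$; this is exactly $(-1)^l\zeta_p(a_1+b_1,\dots,a_s+b_s)$.

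The main obstacle is matching the working description of $\zeta^{\diamondsuit}_N$ used above to the actual \cite[Definition~2.3]{HiroseMaesakaSekiWatanabe2025+}: one must verify that the substitutions $n_i\mapsto N-n_i$ and the accompanying relaxations from strict to non-strict inequalities occur precisely at the $1$-positions (and nowhere else), and that no additional combinatorial factors appear. Once this bookkeeping is in place, the modular reduction and the resulting cancellation are elementary.
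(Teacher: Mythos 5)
Your argument is correct in substance but takes a genuinely different route from the paper. The paper never unpacks the subset-sum definition of $\zeta^{\diamondsuit}_p$ at all: it first invokes \cite[Corollary~3.10]{HiroseMaesakaSekiWatanabe2025+} to replace $\zeta^{\diamondsuit}_p$ by the single sum $\zeta^{\diamondsuit\flat}_p$ (blocks with non-strict inequalities in which \emph{every} variable of a $1$-block is reflected), then reduces $(p-n_{i,j})\equiv -n_{i,j}$ for the $a_i-1$ non-initial variables of each block to pick up $(-1)^l$ while deliberately leaving one reflected factor $(p-n_{i,1})$ per block intact, so that the result is literally $(-1)^l\zeta^{\flat}_p(a_1+b_1,\dots,a_s+b_s)$, and finally quotes the duality $\zeta^{\flat}_p=\zeta^{}_p$ of \cite[Theorem~1.3]{MaesakaSekiWatanabe2024+}. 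Your route instead works directly from the definition of $\zeta^{\diamondsuit}_p$ as an alternating-free sum over subsets $S$ of the $1$-positions, flips all $|S|$ reflected factors mod $p$, and uses the identity $[n_i<n_{i+1}]-[n_i\le n_{i+1}]=-[n_i=n_{i+1}]$ to contract each $1$-block; this replaces the two cited theorems by an elementary inclusion--exclusion and is arguably more self-contained, at the price of having to verify that Definition~2.3 of the reference really is the expansion you wrote down (the displayed two-term example for $\zeta^{\diamondsuit}(3,\{2\}^{c-1},1,2)$ in Section~1 confirms it for a single isolated $1$-entry, but you should check the case of consecutive $1$'s, which does occur here when some $a_i\geq 3$). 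Once that identification is confirmed, your sign count $(-1)^{|I_1|}=(-1)^l$ and the contraction of each block to exponent $a_i+b_i$ with strict inequalities surviving between blocks are all correct, so the proof goes through.
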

\begin{proof}
By \cite[Corollary~3.10]{HiroseMaesakaSekiWatanabe2025+} and \cite[Theorem~1.3]{MaesakaSekiWatanabe2024+} (= \cite[Theorem~3.5]{HiroseMaesakaSekiWatanabe2025+}), we compute
\begin{align*}
&\zeta^{\diamondsuit}_p(\{1\}^{a_1-1},b_1+1,\dots, \{1\}^{a_s-1},b_s+1)\\
&=\zeta^{\diamondsuit\flat}_p(\{1\}^{a_1-1},b_1+1,\dots, \{1\}^{a_s-1},b_s+1)\\
&=\sum_{\substack{0<n_{i,1}\leq\cdots\leq n_{i,a_i}\leq m_{i,1}\leq\cdots\leq m_{i,b_i}<p \ (i\in[s]) \\ m_{i,b_i}<n_{i+1,1} \ (i\in[s-1])}}\prod_{i\in[s]}\frac{1}{(p-n_{i,1})\cdots(p-n_{i,a_i})}\frac{1}{m_{i,1}\cdots m_{i,b_i}}\\
&\equiv(-1)^l\sum_{\substack{0<n_{i,1}\leq\cdots\leq n_{i,a_i}\leq m_{i,1}\leq\cdots\leq m_{i,b_i}<p \ (i\in[s]) \\ m_{i,b_i}<n_{i+1,1} \ (i\in[s-1])}}\prod_{i\in[s]}\frac{1}{(p-n_{i,1})n_{i,2}\cdots n_{i,a_i}}\frac{1}{m_{i,1}\cdots m_{i,b_i}}\pmod{p}\\
&=(-1)^l\zeta_p^{\flat}(a_1+b_1,\dots, a_s+b_s)=(-1)^l\zeta^{}_p(a_1+b_1,\dots,a_s+b_s).\qedhere
\end{align*}
\end{proof}
\begin{proof}[Proof of Corollary~$\ref{cor:FMZV}$]
Corollary~\ref{cor:diamond-lift} is an identity in $\QQ^{\NN}$.
By extracting the sequence of $p$-components over primes $p$ and projecting it to an identity in $\cA$, we have the desired equality.
For this argument, it remains to show that
\begin{align*}
&\zeta^{\diamondsuit}_p(\{2\}^{a_1-1},3,\dots,\{2\}^{a_r-1},3,\{2\}^{c-1},1,\{2\}^{b_1},\dots, 1,\{2\}^{b_s})\\
&\equiv(-1)^s\zeta^{}_p(\{2\}^{a_1-1},3,\dots,\{2\}^{a_r-1},3,\{2\}^{c-1},3,\{2\}^{b_1-1},\dots, 3,\{2\}^{b_s-1})\pmod{p}.
\end{align*}
This follows immediately from Lemma~\ref{lem:diamond-modp}.
\end{proof}


\begin{thebibliography}{99}
\bibitem{BBBL}
J. M. Borwein, D. M. Bradley, D. J. Broadhurst, P. Lison\v{e}k, \emph{Special values of multiple polylogarithms}, Trans.~Amer.~Math.~Soc.~\textbf{353} (2001), 907--941.
\bibitem{Brown2012}
F. Brown, \emph{Mixed Tate motives over $\ZZ$}, Ann.~of Math.~\textbf{175} (2012), 949--976.
\bibitem{Charlton2016}
 S. P. Charlton, \emph{Identities arising from coproducts on multiple zeta values and multiple polylogarithms}, Durham theses, 2016.
\bibitem{HiroseMaesakaSekiWatanabe2025+}
M. Hirose, T. Maesaka, S. Seki, T. Watanabe, \emph{The $\ZZ$-module of multiple zeta values is generated by ones for indices without ones}, preprint, arXiv:2505.07221.
\bibitem{HiroseSato2019A}
M. Hirose, N. Sato, \emph{Hoffman's conjectural identity}, Int.~J. Number Theory \textbf{15} (2019), 167--171.
\bibitem{HiroseSato2019B}
M. Hirose, N. Sato, \emph{Iterated integrals on $\mathbb{P}^1\setminus\{0,1,\infty,z\}$ and a class of relations among multiple zeta values}, Adv.~Math.~\textbf{348} (2019), 163--182.
\bibitem{HiroseSato2022+}
M. Hirose, N. Sato, \emph{Block shuffle identities for multiple zeta values}, preprint, arXiv:2206.03458.
\bibitem{Hoffman1997}
M. Hoffman, \emph{The algebra of multiple harmonic series}, J. Algebra \textbf{194} (1997), 477--495.
\bibitem{KanekoZagier2025+}
M. Kaneko, D. Zagier, \emph{Finite multiple zeta values}, preprint.
\bibitem{MaesakaSekiWatanabe2024+}
T. Maesaka, S. Seki, T. Watanabe, \emph{Deriving two dualities simultaneously from a family of identities for multiple harmonic sums},  preprint, arXiv:2402.05730.
\bibitem{Ono2017}
M. Ono, \emph{Finite multiple zeta values associated with $2$-colored rooted trees}, J. Number Theory \textbf{181} (2017), 99--116.
\end{thebibliography}
\end{document}